\newtheorem{theorem}{Theorem}[section]
\newtheorem*{remark}{Remark}
\numberwithin{equation}{section}
\newcommand{\PR}{\mathbb P}
\newcommand{\ER}{\mathbb E}
\newcommand{\VR}{\mathbb V \text{ar}}
\newcommand{\D}{\mathcal D}
\newcommand{\G}{\mathcal G}
\begin{document}

\title[Exact asymptotics for constrained exponential random graphs]{Large deviations and exact asymptotics for constrained exponential
random graphs}

\author{Mei Yin}

\thanks{Mei Yin's research was partially supported by NSF grant DMS-1308333.}

\address{Department of Mathematics, University of Denver, Denver, CO 80208, USA}

\email{mei.yin@du.edu}

\dedicatory{\rm \today}

\subjclass[2000]{60F10, 05C80, 60C05}

\keywords{large deviations, normalization constants, exponential random graphs.}

\begin{abstract}
We present a technique for approximating generic normalization constants subject to constraints. The method is then applied to derive the exact asymptotics
for the conditional normalization constant of constrained exponential random graphs.
\end{abstract}

\maketitle

\section{Introduction}
\label{intro}
Exponential random graph models are widely used to characterize the structure and behavior of real-world networks as they are able to predict the global structure of the networked system based on a set of tractable local features. Let $s$ be a positive integer. We recall the definition of an $s$-parameter family of exponential random graphs. Let $H_1,\dots,H_s$ be fixed finite simple graphs (``simple'' means
undirected, with no loops or multiple edges). By convention, we take $H_1$ to be a single edge. Let $\zeta_1,\dots,\zeta_s$ be $s$ real parameters and let $N$ be a positive integer. Consider the set $\G_N$ of all simple graphs
$G_N$ on $N$ vertices. Let $\text{hom}(H_i, G_N)$ denote the number of homomorphisms (edge-preserving vertex maps) from the vertex set $V(H_i)$ into the vertex set $V(G_N)$ and $t(H_i, G_N)$ denote the homomorphism density of $H_i$ in $G_N$,
\begin{equation}
\label{t} t(H_i, G_N)=\frac{|\text{hom}(H_i,
G_N)|}{|V(G_N)|^{|V(H_i)|}}.
\end{equation}
By an $s$-parameter family of exponential
random graphs we mean a family of probability measures
$\PR_N^{\zeta}$ on $\G_N$ defined by, for $G_N\in\G_N$,
\begin{equation}
\label{pmf} \PR_N^{\zeta}(G_N)=\exp\left(N^2\left(\zeta_1
t(H_1,G_N)+\cdots+
  \zeta_s t(H_s,G_N)-\psi_N^{\zeta}\right)\right),
\end{equation}
where the parameters $\zeta_1,\dots,\zeta_s$ are used to tune the densities of different subgraphs $H_1,\dots,H_s$ of $G_N$ and $\psi_N^{\zeta}$ is the normalization constant,
\begin{equation}
\label{psi} \psi_N^{\zeta}=\frac{1}{N^2}\log\sum_{G_N \in
\G_N} \exp\left(N^2 \left(\zeta_1
t(H_1,G_N)+\cdots+\zeta_s t(H_s,G_N)\right) \right).
\end{equation}

These exponential models are analogues of grand canonical ensembles in statistical physics, with particle and energy densities in place of subgraph densities, and temperature and chemical potentials in place of tuning parameters. A key objective while studying these models is to evaluate the normalization constant. It encodes essential information about the model since averages of various quantities of interest may be obtained by differentiating the normalization constant with respect to appropriate parameters. Indeed, a phase is commonly characterized as a connected region of the parameter space, maximal for the condition that the limiting normalization constant is analytic, and phase boundaries are determined by examining the singularities of its derivatives. Computation of the normalization constant is also important in statistics because it is crucial for carrying out maximum likelihood estimates and Bayesian inference of unknown parameters. The computation though is not always reliable for large $N$. For example, as shown by Chatterjee and Diaconis \cite{CD}, when $s=2$ and $\zeta_2>0$, all graphs drawn from the exponential model (\ref{pmf}) are not appreciably different from Erd\H{o}s-R\'{e}nyi in the large $N$ limit.

This implies that sometimes subgraph densities cannot be tuned in the unconstrained model and exponential random graphs alone may not capture all desirable features of the networked system, such as interdependency and clustering. Furthermore, unlike standard statistical physics models, the equivalence of various ensembles (microcanonical, canonical, grand canonical) in the asymptotic regime does not hold in these models. One possible explanation is that since the normalization constant in the microcanonical ensemble is not always a convex function of the parameters \cite{RS}, the Legendre transform between the normalization constants in different ensembles is not invertible (see \cite{TET} for discussions about non-equivalence of ensembles). We are thus motivated to study the constrained exponential random graph model in \cite{KY}, where some subgraph density is controlled directly and others are tuned with parameters. In contrast to the above example where in the limit as $N\rightarrow \infty$, all graphs are close to Erd\H{o}s-R\'{e}nyi as $\zeta_2$ increases from $0$ to $\infty$, it was shown in \cite{KY} that for fixed edge density, a typical graph drawn from the constrained edge-triangle model still exhibits Erd\H{o}s-R\'{e}nyi structure for $\zeta_2$ close to $0$, but consists of one big clique and some isolated vertices as $\zeta_2$ gets sufficiently close to infinity. Notice that the transition observed in the constrained model is between graphs of different characters, whereas in the unconstrained model, although there is a curve in the parameter space across which the graph densities display sudden jumps \cite{CD, RY}, the transition is between graphs of similar characters (Erd\H{o}s-R\'{e}nyi graphs). Interesting mathematics is therefore expected from studying the constrained model, and in particular, the associated normalization constant directly; the normalization constant in the unconstrained model may sometimes be of no particular relevance.

For clarity, we assume that the edge density of the graph is approximately known to be $e$, though
the argument runs through without much modification if the density of some other more complicated subgraph is approximately described. Take
$t>0$. The conditional normalization constant
$\psi^{e, \zeta}_{N,t}$ is defined analogously to the normalization
constant $\psi^{\zeta}_{N}$ for the unconstrained exponential random graph model,
\begin{equation}
\label{cpsi1} \psi^{e, \zeta}_{N,t}=\frac{1}{N^2}\log\sum_{G_N\in
\mathcal{G}_N: |e(G_N)-e|\leq t}\exp\left(N^2 \left(\zeta_1
t(H_1,G_N)+\cdots+\zeta_s t(H_s,G_N)\right)\right),
\end{equation}
the difference being that we are only taking into account graphs
$G_N$ whose edge density $e(G_N)$ is within a $t$
neighborhood of $e$. Correspondingly, the associated conditional
probability measure $\PR^{e, \zeta}_{N,t}(G_N)$ is given by
\begin{equation}
\label{cpmf} \PR^{e, \zeta}_{N,t}(G_N)=\exp\left(N^2 \left(\zeta_1
t(H_1,G_N)+\cdots+\zeta_s t(H_s,G_N)-\psi^{e, \zeta}_{N,t}\right)\right)\mathbbm{1}_{|e(G_N)-e| \leq t}.
\end{equation}

Based on a large deviation principle for Erd\H{o}s-R\'{e}nyi graphs established in Chatterjee and Varadhan \cite{CV}, Chatterjee and Diaconis \cite{CD} developed an asymptotic approximation for the normalization constant $\psi_N^{\zeta}$ as $N\rightarrow \infty$ and connected the occurrence of a phase transition in the dense exponential model with the non-analyticity of the asymptotic limit of $\psi_N^{\zeta}$. Further investigations quickly followed, see for example \cite{AR, RRS, RS1, RS, RY, YRF, Z}. However, since the approximation relies on Szemer\'{e}di's regularity lemma, the error bound on $\psi_N^{\zeta}$ is of the order of some negative power of
\begin{eqnarray}
\log^* N=\left\{%
\begin{array}{ll}
    0, & \hbox{if $N\leq 1$;} \\
    1+\log^*(\log N), & \hbox{if $N>1$,} \\
\end{array}%
\right.
\end{eqnarray}
which is the number of times the logarithm function must be iteratively applied before the result is less than or equal to $1$, and this method is also not applicable for sparse exponential random graphs. Analogously, using the large deviation principle established in Chatterjee and
Varadhan \cite{CV} and Chatterjee and Diaconis \cite{CD}, we developed an asymptotic approximation for the conditional normalization constant $\psi^{e, \zeta}_{N,t}$ as $N \rightarrow \infty$ and $t \rightarrow 0$, since it is in this limit that interesting singular behavior occurs \cite{KY}. Nevertheless, this approximation suffers from the same problem: the error bound on $\psi^{e, \zeta}_{N,t}$ is of the order of some negative power of $\log^* N$ and the method does not lead to an exact limit for $\psi^{e, \zeta}_{N,t}$ in the sparse setting.

To improve on the approximation, Chatterjee and Dembo \cite{CD1} presented a general technique for computing large deviations of nonlinear functions of independent Bernoulli random variables in a recent work. In detail, let $f$ be a function from $[0,1]^n$ to $\mathbb{R}$, they considered a generic normalization constant of the form
\begin{equation}
\label{free}
F=\log \sum_{x\in \{0,1\}^n}e^{f(x)}
\end{equation}
and investigated conditions on $f$ such that the approximation
\begin{equation}
\label{valid}
F=\sup_{x\in [0,1]^n}(f(x)-I(x))+\text{ lower order terms}
\end{equation}
is valid, where $I(x)=\sum_{i=1}^n I(x_i)$ and
\begin{equation}
\label{I}
I(x_i)=\sum_{i=1}^n (x_i\log x_i+(1-x_i)\log(1-x_i)).
\end{equation}
They then applied the general result and obtained bounds for the normalization constant $\psi_N^{\zeta}$ for finite $N$, which leads to a variational formula for the asymptotic normalization of exponential random graphs with a small amount of sparsity. Serious attempts have also been made at formulating a suitable ``sparse'' version of Szemer\'{e}di's lemma \cite{BCCZ1,BCCZ2}. This however may not always provide the precision required for large deviations, since random graphs do not necessarily satisfy the proposed regularity conditions in the large deviations regime. Seeing the power of nonlinear large deviations in deriving a concrete error bound for $\psi^{\zeta}_N$ as $N\rightarrow \infty$, we naturally wonder if it is possible to likewise obtain a better estimate for $\psi^{e, \zeta}_{N,t}$ as $N \rightarrow \infty$ and $t \rightarrow 0$, which will shed light on constrained exponential random graphs with sparsity. The following sections will be dedicated towards this goal. Due to the imposed constraint, instead of working with a generic normalization constant of the form (\ref{free}) as in Chatterjee and Dembo \cite{CD1}, we will work with a generic conditional normalization constant in Theorem \ref{main1} and then apply this result to derive a concrete error bound for the conditional normalization constant $\psi^{e, \zeta}_{N,t}$ of constrained exponential random graphs in Theorems \ref{general} and \ref{special}.

\section{Overview of Chatterjee-Dembo results}
\label{overview}
Chatterjee and Dembo came up with a two-part sufficient condition under which the approximation (\ref{valid}) holds. They first assumed that $f$ is a twice continuously differentiable function on $[0,1]^n$ and introduced some shorthand notation. Let $\Vert \cdot \Vert$ denote the supremum norm. For each $i$ and $j$, let
\begin{equation}
f_i=\frac{\partial f}{\partial x_i} \text{ and } f_{ij}=\frac{\partial^2 f}{\partial x_i \partial x_j}
\end{equation}
and define $a=\Vert f \Vert$, $b_i=\Vert f_i \Vert$, and $c_{ij}=\Vert f_{ij} \Vert$. In addition to this minor smoothness condition on the function $f$, they further assumed that the gradient vector $\nabla f(x)=(\partial f/\partial x_1, \dots,\partial f/\partial x_n)$ satisfies a low complexity gradient condition: For any $\epsilon>0$, there is a finite subset of $\mathbb{R}^n$ denoted by $\D(\epsilon)$ such that for all $x\in [0,1]^n$, there exists $d=(d_1,\dots,d_n) \in \D(\epsilon)$ with
\begin{equation}
\label{D}
\sum_{i=1}^n (f_i(x)-d_i)^2 \leq n\epsilon^2.
\end{equation}

\begin{theorem} [Theorem 1.5 in \cite{CD1}]
\label{CD1}
Let $F$, $a$, $b_i$, $c_{ij}$, and $\D(\epsilon)$ be defined as above. Let $I$ be defined as in (\ref{I}). Then for any $\epsilon>0$, $F$ satisfies the upper bound
\begin{equation}
F \leq \sup_{x \in [0,1]^n} (f(x)-I(x)) +\text{ complexity term }+\text{ smoothness term},
\end{equation}
where
\begin{equation}
\text{complexity term }=\frac{1}{4}\left(n\sum_{i=1}^n b_i^2\right)^{1/2}\epsilon+3n\epsilon+\log|\D(\epsilon)|, \text{ and}
\end{equation}
\begin{equation}
\text{smoothness term }=4\left(\sum_{i=1}^n (ac_{ii}+b_i^2)+\frac{1}{4}\sum_{i,j=1}^n (ac_{ij}^2+b_ib_jc_{ij}+4b_ic_{ij})\right)^{1/2}
\end{equation}
\begin{equation*}
+\frac{1}{4}\left(\sum_{i=1}^n b_i^2\right)^{1/2}\left(\sum_{i=1}^n c_{ii}^2\right)^{1/2}+3\sum_{i=1}^n c_{ii}+\log 2.
\end{equation*}

\vskip.2truein

\noindent Moreover, $F$ satisfies the lower bound
\begin{equation}
F\geq \sup_{x\in [0,1]^n}(f(x)-I(x))-\frac{1}{2}\sum_{i=1}^n c_{ii}.
\end{equation}
\end{theorem}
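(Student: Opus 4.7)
The plan is to prove the upper and lower bounds separately. The lower bound follows from a single application of the Gibbs variational principle combined with a Taylor estimate, whereas the upper bound requires genuinely harder work and is where the low-complexity hypothesis on $\nabla f$ enters.

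\smallskip

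\noindent\emph{Lower bound.} For any $x\in[0,1]^n$, let $\nu_x$ denote the product measure on $\{0,1\}^n$ with $i$-th marginal Bernoulli$(x_i)$. Writing $e^F=\ER_{\nu_x}[e^{f(Y)}/\nu_x(Y)]$ and applying Jensen's inequality (log is concave) yields
\begin{equation*}
F \;\geq\; \ER_{\nu_x}[f(Y)] + H(\nu_x) \;=\; \ER_{\nu_x}[f(Y)] - I(x),
\end{equation*}
since the Shannon entropy of $\nu_x$ equals $-I(x)$. To pass from $\ER_{\nu_x}[f(Y)]$ to $f(x)$, I would Taylor-expand $f$ about $x$: the linear term has zero mean under $\nu_x$, and the quadratic remainder is controlled using $c_{ij}=\|f_{ij}\|$ together with $\mathrm{Var}_{\nu_x}(Y_i)=x_i(1-x_i)\leq 1/4$. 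Integrating over the coordinates one at a time (so that only diagonal Hessian entries appear at each step) gives the clean estimate $|\ER_{\nu_x}[f(Y)]-f(x)|\leq \tfrac12\sum_i c_{ii}$. Taking the supremum over $x$ finishes the lower bound.

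\smallskip

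\noindent\emph{Upper bound.} Let $\mu(y)=e^{f(y)}/Z$ be the Gibbs measure, so that the variational identity $F=\ER_\mu[f(X)]+H(\mu)$ holds. Subadditivity of Shannon entropy applied to the one-dimensional marginals of $\mu$ yields $H(\mu)\leq\sum_i H(\mu_i)=-I(x^\ast)$, where $x^\ast_i=\ER_\mu[X_i]$, and hence
\begin{equation*}
F \;\leq\; \ER_\mu[f(X)] - I(x^\ast).
\end{equation*}
The task reduces to showing $\ER_\mu[f(X)]\leq f(x^\ast)+\text{error}$. Since $\mu$ is not a product measure, a naive Taylor expansion about $x^\ast$ is insufficient; moreover, the McDiarmid-type sub-Gaussian proxy $\tfrac14\sum_i b_i^2$ for $f(Y)$ under a product measure of mean $x^\ast$ is far too lossy in the regime of interest (where the $b_i$ may be large, as happens in the exponential random graph application). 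To remedy this I would exploit the low-complexity hypothesis by covering $[0,1]^n$ with the sets $A_d=\{y:\sum_i (f_i(y)-d_i)^2\leq n\epsilon^2\}$, $d\in\D(\epsilon)$, on each of which $f$ is well approximated by an affine function with slope $d$. Because exponential sums of affine functions against independent Bernoullis are explicit, each bucket contributes to $Z$ with a sub-Gaussian variance proxy of order $n\epsilon^2$ rather than $\sum_i b_i^2$. A union bound over $\D(\epsilon)$ introduces the $\log|\D(\epsilon)|$ contribution, and the cost of replacing $f$ by its affine approximation on $A_d$, together with the control of second-order remainders via $a$, $b_i$ and $c_{ij}$, produces the intricate combinations appearing in the stated complexity and smoothness terms.

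\smallskip

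\noindent\emph{Main obstacle.} The principal difficulty is making this bucket-by-bucket scheme quantitatively tight. One must track simultaneously (i) the entropic cost of localizing to $A_d$, (ii) the affine approximation error on $A_d$, and (iii) the second-order fluctuation of $f$ about its affine approximation, arranging things so that the complexity penalty enters \emph{additively} as $\log|\D(\epsilon)|$ rather than multiplicatively. This requires a coupling or interpolation argument that carefully trades off $\epsilon$ against the growth of $\log|\D(\epsilon)|$ and against the smoothness constants $a$, $b_i$, $c_{ij}$, and it is precisely this bookkeeping that generates the combinations $\tfrac14\bigl(n\sum_i b_i^2\bigr)^{1/2}\epsilon$ and $\bigl(\sum_i (ac_{ii}+b_i^2)+\tfrac14\sum_{i,j}(ac_{ij}^2+b_ib_jc_{ij}+4b_ic_{ij})\bigr)^{1/2}$ appearing in the statement.
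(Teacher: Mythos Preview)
The paper does not prove this theorem. Theorem~\ref{CD1} is explicitly stated as ``Theorem 1.5 in \cite{CD1}'' and is quoted verbatim from Chatterjee--Dembo as a black-box tool; Section~\ref{overview} is an overview of their results, not a reproof. The only proofs in the paper are of Theorems~\ref{main1}, \ref{general}, and \ref{special}, and the upper bound of Theorem~\ref{main1} is obtained precisely by \emph{applying} Theorem~\ref{CD1} to the modified function $f+e$. So there is nothing in this paper to compare your proposal against.

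On the substance of your sketch: the lower-bound argument is correct and is indeed how Chatterjee--Dembo prove it (Jensen plus a coordinate-by-coordinate second-order remainder). For the upper bound, you have identified the right ingredients (Gibbs variational identity, entropy subadditivity, the covering $\{A_d\}_{d\in\D(\epsilon)}$), but the mechanism you describe --- a union bound over buckets with an affine approximation on each --- is not quite how the Chatterjee--Dembo proof proceeds. Their argument does not partition the state space and sum contributions; rather, it shows that the Gibbs measure $\mu$ approximately satisfies the mean-field fixed-point equation $\hat x_i\approx\tfrac{e^{f_i(\hat x)}}{1+e^{f_i(\hat x)}}$ for a suitable $\hat x$, via a second-moment computation on $f(X)-f(\hat x)$ that produces the smoothness term, and then uses the covering $\D(\epsilon)$ to discretize the possible values of $\nabla f(\hat x)$, which is where $\log|\D(\epsilon)|$ enters additively. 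Your ``affine on each bucket plus union bound'' heuristic would typically produce a complexity term inside an exponential rather than additively, which is exactly the obstacle you flag but do not resolve.
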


To utilize Theorem \ref{CD1} in the exponential random graph setting, Chatterjee and Dembo introduced an equivalent definition of the homomorphism density so that the normalization constant for exponential random graphs (\ref{psi}) takes the same form as the generic normalization constant (\ref{free}). This notion of the homomorphism density, which dates back to Lov\'{a}sz, is denoted by $t(H,x)$ and may be constructed not only for simple graphs but also for more general objects (referred to as ``graphons'' in Lov\'{a}sz \cite{Lov}). Let $k$ be a positive integer and let $H$ be a finite simple graph on the vertex set $[k]=\{1,\dots,k\}$. Let $E$ be the set of edges of $H$ and let $m=|E|$. Let $N$ be another positive integer and let $n=\binom N2$. Index the elements of $[0,1]^n$ as $x=(x_{ij})_{1\leq i<j\leq N}$ with the understanding that if $i<j$, then $x_{ji}$ is the same as $x_{ij}$, and for all $i$, $x_{ii}=0$. Let $t(H,x)=T(x)/N^2$, where $T: [0,1]^n \rightarrow \mathbb{R}$ is defined as
\begin{equation}
\label{T}
T(x)=\frac{1}{N^{k-2}}\sum_{q\in [N]^k}\prod_{\{l,l'\}\in E}x_{q_l q_{l'}}.
\end{equation}
For any graph $G_N$, if $x_{ij}=1$ means there is an edge between the vertices $i$ and $j$ and $x_{ij}=0$ means there is no edge, then $t(H,x)=t(H,G_N)$, where $t(H,G_N)$ is the homomorphism density defined by (\ref{t}). Furthermore, if we let $G_x$ denote the simple graph whose edges are independent, and edge $(i,j)$ is present with probability $x_{ij}$ and absent with probability $1-x_{ij}$, then $t(H,x)$ gives the expected value of $t(H, G_x)$. Chatterjee and Dembo checked that $T(x)$ satisfies both the smoothness condition and the low complexity gradient condition as assumed in Theorem \ref{CD1}.  In detail, they showed in Lemmas 5.1 and 5.2 of \cite{CD1} that
\begin{equation}
\label{T1}
\Vert T \Vert \leq N^2, \hspace{0.5cm} \Vert \frac{\partial T}{\partial x_{ij}} \Vert \leq 2m,
\end{equation}
\begin{eqnarray}
\label{T2}
\left \Vert \frac{\partial^2 T}{\partial x_{ij}\partial x_{i'j'}} \right \Vert \leq \left\{%
\begin{array}{ll}
    4m(m-1)N^{-1}, & \hbox{if $|\{i,j,i',j'\}|=2$ or $3$;} \\
    4m(m-1)N^{-2}, & \hbox{if $|\{i,j,i',j'\}|=4$,} \\
\end{array}%
\right.
\end{eqnarray}
and for any $\epsilon>0$,
\begin{equation}
\label{T3}
|\D_T(\epsilon)| \leq \exp\left( \frac{cm^4k^4N}{\epsilon^4}\log \frac{Cm^4k^4}{\epsilon^4}\right),
\end{equation}
where $c$ and $C$ are universal constants. By taking $f(x)=\zeta_1T_1(x)+\cdots+\zeta_sT_s(x)$ in Theorem \ref{CD1}, they then gave a concrete error bound for the normalization constant $\psi_N^{\zeta}$, which is seen to be $F/N^2$ in this alternative interpretation of (\ref{free}). This error bound is significantly better than the negative power of $\log^* N$ and allows a small degree of
sparsity for $\zeta_i$. As Theorem \ref{previous} shows, the difference between $\psi_N^{\zeta}$ and the approximation $\sup_{x\in [0,1]^n}\frac{f(x)-I(x)}{N^2}$ tends to zero as long as $\sum_{i=1}^s |\zeta_i|$ grows slower than $N^{1/8}(\log N)^{-1/8}$.

\begin{theorem} [Theorem 1.6 in \cite{CD1}]
\label{previous}
Let $s$ be a positive integer and $H_1,\dots,H_s$ be fixed finite simple graphs. Let $N$ be another positive integer and let $n=\binom N2$. Define $T_1,\dots,T_s$ accordingly as in the above paragraph. Let $\zeta_1,\dots,\zeta_s$ be $s$ real parameters and define $\psi_N^{\zeta}$ as in (\ref{psi}). Let $f(x)=\zeta_1 T_1(x)+\cdots+\zeta_s T_s(x)$, $B=1+|\zeta_1|+\cdots+|\zeta_s|$, and $I$ be defined as in (\ref{I}). Then
\begin{equation}
-cBN^{-1}\leq \psi_N^{\zeta}-\sup_{x\in [0,1]^n}\frac{f(x)-I(x)}{N^2}
\end{equation}
\begin{equation*}
\leq CB^{8/5}N^{-1/5}(\log N)^{1/5}\left(1+\frac{\log B}{\log N}\right)+CB^2N^{-1/2},
\end{equation*}
where $c$ and $C$ are constants that may depend only on $H_1,\dots,H_s$.
\end{theorem}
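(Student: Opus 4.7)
The plan is to apply Theorem \ref{CD1} directly to $f(x)=\zeta_1 T_1(x)+\cdots+\zeta_s T_s(x)$, noting that $\psi_N^\zeta = F/N^2$ under the graphon encoding of $t(H_i,G_N)$. The lower bound is immediate: the stated lower bound of Theorem \ref{CD1} plus the shared-vertex case of (\ref{T2}) gives $\sum_e c_{ee} \le C B N$ (with $C$ depending on the $H_i$), which after dividing by $N^2$ yields the $-cBN^{-1}$ lower bound.

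For the upper bound, I would first propagate (\ref{T1})-(\ref{T2}) through the linear combination by the triangle inequality: $a \le BN^2$, $b_e \le CB$ uniformly in the edge index $e$, and $c_{ee'} \le CBN^{-1}$ or $CBN^{-2}$ according to whether $\{i,j\}$ and $\{i',j'\}$ share a vertex or are disjoint. Next, I would transfer the low-complexity gradient condition from each $T_i$ to $f$: if we approximate each $\nabla T_i$ in $\ell^2$ within $\sqrt{n}\,\delta_i$ and set $\delta_i = \epsilon/(sB)$, Cauchy--Schwarz gives $\sum_e (f_e - \sum_i \zeta_i d_{i,e})^2 \le n\epsilon^2$, so $\log|\D_f(\epsilon)| \le \sum_{i=1}^s \log|\D_{T_i}(\epsilon/(sB))|$. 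Plugging in (\ref{T3}) produces
\begin{equation*}
\log|\D_f(\epsilon)| \le C B^4 N \epsilon^{-4}\bigl(\log N + \log B + \log \epsilon^{-1}\bigr),
\end{equation*}
with $C$ depending only on $H_1,\ldots,H_s$.

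The next step is to substitute these ingredients into the complexity and smoothness expressions of Theorem \ref{CD1}. Counting edge pairs by their intersection pattern, the dominant contribution to the smoothness sum is $\sum_{e,e'} a c_{ee'}^2 = O(B^2 N^3)$, with all other pieces of comparable or smaller order; taking the square root and dividing by $N^2$ gives a term of order $BN^{-1/2}$, bounded loosely by $CB^2 N^{-1/2}$. For the complexity side the competing pieces are $\tfrac14(n\sum b_e^2)^{1/2}\epsilon = O(BN^2 \epsilon)$ and $\log|\D_f(\epsilon)|$, and I would balance them by choosing $\epsilon \asymp (B^3 \log N / N)^{1/5}$. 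A short computation then shows both terms are of order $B^{8/5}N^{2-1/5}(\log N)^{1/5}(1+\log B/\log N)$, which after dividing by $N^2$ matches the stated bound.

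The main obstacle is purely bookkeeping rather than anything conceptual: the $B$-dependence has to be tracked through every term (including the $\delta_i = \epsilon/(sB)$ splitting, which feeds the $B^4$ into the complexity estimate and drives the $B^{8/5}$ in the final exponent), and the smoothness sums must be stratified by the $2$-, $3$-, $4$-vertex cases of (\ref{T2}) so that the correct powers of $N$ cancel. Once these are in hand the $\epsilon$-optimization is routine, and constants depending only on the fixed graphs $H_1,\ldots,H_s$ can be absorbed into the universal constant $C$.
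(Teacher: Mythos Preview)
Your proposal is correct and follows essentially the same approach as the paper: the paper does not give a self-contained proof of Theorem~\ref{previous} (it is quoted from \cite{CD1}), but the sketch preceding the statement and the parallel argument written out in full for Theorem~\ref{general} match your plan step for step---propagate (\ref{T1})--(\ref{T3}) to $f$ via the triangle inequality and the splitting $\D_f(\epsilon)\subset\prod_i \D_{T_i}(\epsilon/(|\zeta_i|s))$, stratify the smoothness sums by $|\{i,j,i',j'\}|$, and optimize with $\epsilon\asymp (B^3\log N/N)^{1/5}$. The only cosmetic difference is that the paper uses $\epsilon/(|\zeta_i|s)$ rather than your uniform $\epsilon/(sB)$ in the gradient-covering step, which makes no difference to the final $B^4$ factor in $\log|\D_f(\epsilon)|$.
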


\section{Nonlinear large deviations}
\label{ld}
Let $f$ and $h$ be two continuously differentiable functions from $[0,1]^n$ to $\mathbb{R}$. Assume that $f$ and $h$ satisfy both the smoothness condition and the low complexity gradient condition described at the beginning of this paper. Let $a$, $b_i$, $c_{ij}$ be the supremum norms of $f$ and let $\alpha$, $\beta_i$, $\gamma_{ij}$ be the corresponding supremum norms of $h$. For any $\epsilon>0$, let $\D_f(\epsilon)$ and $\D_h(\epsilon)$ be finite subsets of $\mathbb{R}^n$ associated with the gradient vectors of $f$ and $h$ respectively. Take $t>0$. Consider a generic conditional normalization constant of the form
\begin{equation}
\label{F}
F^c=\log \sum_{x\in \{0,1\}^n: |h(x)|\leq tn}e^{f(x)}.
\end{equation}

\begin{theorem}
\label{main1}
Let $F^c$,  $a$, $b_i$, $c_{ij}$, $\alpha$, $\beta_i$, $\gamma_{ij}$, $\D_f(\epsilon)$, and $\D_h(\epsilon)$ be defined as above. Let $I$ be defined as in (\ref{I}). Let $K=\log 2+2a/n$. Then for any $\delta>0$ and $\epsilon>0$, $F^c$ satisfies the upper bound

\vspace{0.05cm}

\begin{equation}
F^c\leq \sup_{x\in [0,1]^n: |h(x)|\leq (t+\delta)n}(f(x)-I(x))+\text{ complexity term }+\text{ smoothness term},
\end{equation}
where
\begin{equation}
\text{complexity term }=\frac{1}{4}\left(n\sum_{i=1}^n m_i^2\right)^{1/2}\epsilon+3n\epsilon+\log\left(\frac{12K\left(\frac{1}{n}\sum_{i=1}^n \beta_i^2\right)^{1/2}}{\delta\epsilon}\right)
\end{equation}
\begin{equation*}
+\log|\D_f(\epsilon/3)|+
\log|\D_h((\delta\epsilon)/(6K))|, \text{ and}
\end{equation*}
\begin{equation}
\text{smoothness term }=4\left(\sum_{i=1}^n (ln_{ii}+m_i^2)+\frac{1}{4}\sum_{i,j=1}^n (ln_{ij}^2+m_im_jn_{ij}+4m_in_{ij})\right)^{1/2}
\end{equation}
\begin{equation*}
+\frac{1}{4}\left(\sum_{i=1}^n m_i^2\right)^{1/2}\left(\sum_{i=1}^n n_{ii}^2\right)^{1/2}+3\sum_{i=1}^n n_{ii}+\log 2,
\end{equation*}
where
\begin{equation}
l=a+nK,
\end{equation}
\begin{equation}
m_i=b_i+\frac{2K \beta_i}{\delta},
\end{equation}
\begin{equation}
n_{ij}=c_{ij}+\frac{2K \gamma_{ij}}{\delta}+\frac{6K \beta_i \beta_j}{n\delta^2}.
\end{equation}

\vskip.2truein

\noindent Moreover, $F^c$ satisfies the lower bound
\begin{equation}
F^c\geq \sup_{x\in [0,1]^n: |h(x)|\leq (t-\delta_0)n}(f(x)-I(x))-\epsilon_0 n-\eta_0 n-\log 2,
\end{equation}
where
\begin{equation}
\delta_0=\frac{\sqrt{6}}{n}\left(\sum_{i=1}^n (\alpha \gamma_{ii}+\beta_i^2)\right)^{1/2},
\end{equation}
\begin{equation}
\epsilon_0=2\sqrt{\frac{6}{n}},
\end{equation}
\begin{equation}
\eta_0=\frac{\sqrt{6}}{n}\left(\sum_{i=1}^n (a c_{ii}+b_i^2)\right)^{1/2}.
\end{equation}
\end{theorem}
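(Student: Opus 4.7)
The plan splits into an upper and lower bound, each adapting the Chatterjee--Dembo strategy of Theorem~\ref{CD1} to accommodate the constraint $|h(x)|\le tn$.

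For the upper bound, my approach is to replace the hard indicator $\mathbbm{1}_{|h(x)|\le tn}$ by a smooth non-positive penalty. I fix a $C^2$ cutoff $\Phi:\mathbb{R}\to[-nK,0]$ with $\Phi(u)=0$ for $|u|\le t$, $\Phi(u)=-nK$ for $|u|\ge t+\delta$, and derivative bounds $\|\Phi'\|\le 2nK/\delta$ and $\|\Phi''\|\le 6nK/\delta^2$ (a Hermite cubic smoothstep on each of $[t,t+\delta]$ and $[-(t+\delta),-t]$ realizes these constants). Setting $g(x)=\Phi(h(x)/n)$ and $\tilde f=f+g$, the fact that $g\le 0$ with equality on the constraint region gives
\begin{equation*}
e^{F^c}=\sum_{x\in\{0,1\}^n:\,|h(x)|\le tn}e^{f(x)}\le\sum_{x\in\{0,1\}^n}e^{\tilde f(x)},
\end{equation*}
so Theorem~\ref{CD1} applies to $\tilde f$. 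The chain rule immediately yields $\|\tilde f\|\le a+nK=l$, $\|\partial_i\tilde f\|\le b_i+2K\beta_i/\delta=m_i$, and $\|\partial_{ij}\tilde f\|\le c_{ij}+2K\gamma_{ij}/\delta+6K\beta_i\beta_j/(n\delta^2)=n_{ij}$, matching the smoothness constants in the statement. The choice $K=\log 2+2a/n$ forces $\tilde f-I\le-a$ on $\{|h|>(t+\delta)n\}$; since $\sup_{|h|\le(t+\delta)n}(f-I)\ge-a$ (at any $\{0,1\}^n$-point of the original constraint region, $I=0$ and $f\ge-a$), the unrestricted supremum produced by Theorem~\ref{CD1} is in fact dominated by $\sup_{|h(x)|\le(t+\delta)n}(f(x)-I(x))$. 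For the gradient complexity, I build $\D_{\tilde f}(\epsilon)$ from three pieces: the dictionary $\D_f(\epsilon/3)$, the dictionary $\D_h(\delta\epsilon/(6K))$, and a uniform grid of size $\lceil 12K(n^{-1}\sum\beta_i^2)^{1/2}/(\delta\epsilon)\rceil$ for the scalar $\rho=\Phi'(h(x)/n)/n\in[-2K/\delta,2K/\delta]$. Writing $\partial_i\tilde f=f_i+\rho\,h_i$ and decomposing $\partial_i\tilde f-(d_i+\tilde\rho e_i)=(f_i-d_i)+(\rho-\tilde\rho)h_i+\tilde\rho(h_i-e_i)$, the elementary inequality $(A+B+C)^2\le 3(A^2+B^2+C^2)$ distributes the budget $n\epsilon^2$ into three $n\epsilon^2/3$ pieces, each absorbed by one of the three resolutions, reproducing the complexity term exactly.

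For the lower bound, I would use the planting argument from \cite{CD1}. Let $x^*\in[0,1]^n$ approximately attain $\sup_{|h(x)|\le(t-\delta_0)n}(f(x)-I(x))$ and let $\mu^*$ be the product Bernoulli measure on $\{0,1\}^n$ with marginals $x_i^*$, so that $\ER_{\mu^*}[-\log\mu^*(X)]=-I(x^*)$. Then
\begin{equation*}
e^{F^c}=\ER\bigl[e^{f(X)-\log\mu^*(X)}\mathbbm{1}_{|h(X)|\le tn}\bigr]\ge\ER\bigl[e^{f(X)-\log\mu^*(X)}\mathbbm{1}_B\bigr]
\end{equation*}
for any $B\subseteq\{|h(X)|\le tn\}$. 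I take $B=B_1\cap B_2\cap B_3$ with $B_1=\{|h(X)-h(x^*)|\le\delta_0 n\}$, $B_2=\{|f(X)-f(x^*)|\le\eta_0 n\}$, and $B_3=\{|{-\log\mu^*(X)}+I(x^*)|\le\epsilon_0 n\}$. A telescoping path-of-coordinates decomposition combined with the Lipschitz bounds $\|\partial_i f\|\le b_i$ and $\|\partial_i h\|\le\beta_i$ yields $\ER[(h(X)-h(x^*))^2]\le n\sum_i(\alpha\gamma_{ii}+\beta_i^2)$ and $\ER[(f(X)-f(x^*))^2]\le n\sum_i(ac_{ii}+b_i^2)$, while the elementary estimate $p(1-p)\log^2(p/(1-p))\le 1$ on $(0,1)$ gives $\VR(\log\mu^*(X))\le n$. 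Chebyshev then provides $\PR(B_i^c)\le 1/6$ for each $i$, so $\PR(B)\ge 1/2$. Because $|h(x^*)|\le(t-\delta_0)n$, on $B_1$ one has $|h(X)|\le tn$; on $B_2\cap B_3$ the integrand satisfies $f(X)-\log\mu^*(X)\ge f(x^*)-I(x^*)-(\eta_0+\epsilon_0)n$. Therefore $e^{F^c}\ge\tfrac12\exp(f(x^*)-I(x^*)-(\eta_0+\epsilon_0)n)$, which after taking logarithms is the stated lower bound.

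The principal obstacle I anticipate is the upper-bound bookkeeping for $\D_{\tilde f}(\epsilon)$: verifying that the three-layer discretization composes with the precise prefactors $\epsilon/3$, $\delta\epsilon/(6K)$, and $12K(n^{-1}\sum\beta_i^2)^{1/2}/(\delta\epsilon)$ appearing in the statement requires careful tracking of how $K$, $\delta$, and $\epsilon$ interact through the chain rule and the triangle inequality. By comparison, the chain-rule smoothness constants $l,m_i,n_{ij}$ and the three Chebyshev estimates on the lower-bound side are mechanical once the right dictionary is set up.
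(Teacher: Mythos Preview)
Your upper-bound argument is the same as the paper's: the smooth penalty $\Phi$ (the paper writes it as $n\psi$ with $\psi(x)=Kg((t-|x|)/\delta)$ for a fixed smoothstep $g$), the chain-rule constants $l,m_i,n_{ij}$, the case analysis showing the unconstrained supremum of $\tilde f-I$ is dominated by the constrained one, and the three-layer discretization of $\nabla\tilde f$ all match exactly.

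The lower-bound strategy is also the paper's, but your second-moment bounds are off by a factor of $n$ and, as written, do not yield $\PR(B_i^c)\le 1/6$. With your stated bound $\ER[(h(X)-h(x^*))^2]\le n\sum_i(\alpha\gamma_{ii}+\beta_i^2)$, Chebyshev gives $\PR(B_1^c)\le n/6$, not $1/6$, since $\delta_0^2 n^2=6\sum_i(\alpha\gamma_{ii}+\beta_i^2)$. The correct bound is $\ER[(h(X)-h(x^*))^2]\le \sum_i(\alpha\gamma_{ii}+\beta_i^2)$ (and likewise for $f$), and a bare ``telescoping path-of-coordinates'' argument---say a Doob martingale with bounded differences---does not produce it for a general $C^2$ function $h$, because it leaves the bias $\ER h(X)-h(x^*)$ uncontrolled. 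The paper obtains the sharp bound via the integral representation
\[
U:=h(X)-h(x^*)=\int_0^1\sum_i(X_i-x_i^*)\,h_i\bigl(tX+(1-t)x^*\bigr)\,dt,
\]
writes $\ER(U^2)=\int_0^1\sum_i\ER[(X_i-x_i^*)\,u_i(t,X)\,U]\,dt$, and then replaces $X$ by $X^{(i)}$ (the vector with the $i$th coordinate zeroed out) inside $u_i$ and $U$: since $X_i-x_i^*$ is mean-zero and independent of $(X^{(i)},U_i)$, the replaced term vanishes, and what remains is bounded by $2\alpha t\gamma_{ii}+\beta_i^2$. Integrating over $t\in[0,1]$ gives exactly $\sum_i(\alpha\gamma_{ii}+\beta_i^2)$; the argument for $f$ is identical with $(a,b_i,c_{ii})$ in place of $(\alpha,\beta_i,\gamma_{ii})$. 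Once you insert this step, your proof coincides with the paper's.
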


The proof of Theorem \ref{main1} follows a similar line of reasoning as in the proof of Theorem 1.1 of Chatterjee and Dembo \cite{CD1}, however the argument is more involved due to the following reasons. First, instead of having a one-sided constraint $f \geq tn$ as in Theorem 1.1, we have a two-sided constraint $|h|\leq tn$, and this calls for a minor modification of the function $\psi$. Then, more importantly, in Theorem 1.1, the upper and lower bounds are established for a probability measure, whereas here we are trying to establish the upper and lower bounds for the normalization constant of a probability measure with exponential weights. So to justify the upper bound, rather than checking the smoothness condition and the low complexity gradient condition for a single function $g$, which is connected to the constraint on $f$ as in the proof of Theorem 1.1, we need to check the smoothness condition and the low complexity gradient condition for the sum of two functions $f+e$ in our proof, where $f$ is the weight in the exponent and $e$ is connected to the constraint on $h$; while to justify the lower bound, rather than considering two small probability sets $\mathcal{A}$ and $\mathcal{A}'$ as in the proof of Theorem 1.1, we need to consider the probability of one more set $A_3$, which deals with the weight deviation in the exponent in our proof.

\vskip.2truein

\noindent \textit{Proof of the upper bound.} Let $g: \mathbb{R} \rightarrow \mathbb{R}$ be a function that is twice continuously differentiable, non-decreasing, and satisfies $g(x)=-1$ if $x \leq -1$ and $g(x)=0$ if $x \geq 0$. Let $L_1=\Vert g' \Vert$ and $L_2=\Vert g'' \Vert$. Chatterjee and Dembo \cite{CD1} described one such $g$:
\begin{equation}
g(x)=10(x+1)^3-15(x+1)^4+6(x+1)^5-1,
\end{equation}
which gives $L_1 \leq 2$ and $L_2 \leq 6$. Define
\begin{equation}
\psi(x)=Kg((t-|x|)/\delta).
\end{equation}
Then clearly $\psi(x)=-K$ if $|x|\geq t+\delta$, $\psi(x)=0$ if $|x|\leq t$, and $\psi(x)$ is non-decreasing for $-(t+\delta)\leq x\leq -t$ and non-increasing for $t\leq x\leq t+\delta$. We also have
\begin{equation}
\Vert \psi \Vert \leq K, \hspace{0.5cm} \Vert \psi' \Vert \leq \frac{2 K}{\delta}, \hspace{0.5cm} \Vert \psi'' \Vert \leq \frac{6 K}{\delta^2}.
\end{equation}

Let $e(x)=n\psi(h(x)/n)$. The plan is to apply Theorem \ref{CD1} to the function $f+e$ instead of $f$ only. Note that
\begin{equation}
\sum_{x\in \{0,1\}^n: |h(x)|\leq tn}e^{f(x)} \leq \sum_{x\in \{0,1\}^n} e^{f(x)+e(x)}.
\end{equation}
We estimate $f(x)+e(x)-I(x)$ over $[0,1]^n$. There are three cases.
\begin{itemize}
\item If $|h(x)|\leq tn$, then
\begin{equation}
f(x)+e(x)-I(x)=f(x)-I(x) \leq \sup_{x\in [0,1]^n: |h(x)|\leq (t+\delta)n}(f(x)-I(x)).
\end{equation}
\item If $|h(x)|\geq (t+\delta)n$, then
\begin{align}
f(x)+e(x)-I(x)&=f(x)-nK-I(x)\leq a+n\log 2-nK
\\
&\leq -a \leq \sup_{x\in [0,1]^n: |h(x)|\leq (t+\delta)n}(f(x)-I(x)).
\nonumber
\end{align}
\item If $|h(x)|=(t+\delta')n$ for some $0<\delta'<\delta$, then
\begin{equation}
f(x)+e(x)-I(x)\leq f(x)-I(x) \leq \sup_{x\in [0,1]^n: |h(x)|\leq (t+\delta)n}(f(x)-I(x)).
\end{equation}
\end{itemize}
This shows that
\begin{equation}
\sup_{x\in [0,1]^n}(f(x)+e(x)-I(x)) \leq \sup_{x\in [0,1]^n: |h(x)|\leq (t+\delta)n}(f(x)-I(x)).
\end{equation}

We check the smoothness condition for $f+e$ first. Note that
\begin{equation}
\Vert f+e \Vert \leq a+nK=l,
\end{equation}
and for any $i$,
\begin{equation}
\left \Vert \frac{\partial (f+e)}{\partial x_i} \right \Vert \leq b_i+\frac{2K \beta_i}{\delta}=m_i,
\end{equation}
and for any $i$, $j$,
\begin{equation}
\left \Vert \frac{\partial^2 (f+e)}{\partial x_i \partial x_j} \right \Vert \leq c_{ij}+\frac{2K \gamma_{ij}}{\delta}+\frac{6K \beta_i \beta_j}{n\delta^2}=n_{ij}.
\end{equation}

Next we check the low complexity gradient condition for $f+e$. Let
\begin{equation}
\epsilon'=\frac{\epsilon}{3\Vert \psi' \Vert} \text{ and } \tau=\frac{\epsilon}{3\left(\frac{1}{n} \sum_{i=1}^n \beta_i^2\right)^{1/2}}.
\end{equation}
Define
\begin{multline}
\D(\epsilon)=\{d^f+\theta d^h: d^f\in \D_f(\epsilon/3), d^h\in \D_h(\epsilon'), \\ \text{ and } \theta=j\tau \text{ for some integer } -\Vert \psi' \Vert /\tau<j<\Vert \psi' \Vert /\tau\}.
\end{multline}
Note that
\begin{equation}
|\D(\epsilon)| \leq \frac{2\Vert\psi'\Vert}{\tau}|\D_f(\epsilon/3)||\D_h(\epsilon')|.
\end{equation}
Let $e_i=\partial e/\partial x_i$. Take any $x\in [0,1]^n$ and choose $d^f \in \D_f(\epsilon/3)$ and $d^h\in \D_h(\epsilon')$. Choose an integer $j$ between $-\Vert \psi' \Vert /\tau$ and $\Vert \psi' \Vert /\tau$ such that $|\psi'(h(x)/n)-j\tau|\leq \tau$. Let $d=d^f+j\tau d^h$ so that $d\in \D(\epsilon)$. Then
\begin{equation}
\sum_{i=1}^n (f_i(x)+e_i(x)-d_i)^2=\sum_{i=1}^n\left((f_i(x)-d^f_i)+(\psi'(h(x)/n)h_i(x)-j\tau d^h_i)\right)^2
\end{equation}
\begin{eqnarray*}
&\leq& 3\sum_{i=1}^n (f_i(x)-d^f_i)^2+3(\psi'(h(x)/n)-j\tau)^2\sum_{i=1}^n h_i(x)^2+3\Vert \psi' \Vert^2\sum_{i=1}^n (h_i(x)-d^h_i)^2\\
&\leq& \frac{1}{3}n\epsilon^2+3\tau^2 \sum_{i=1}^n \beta_i^2+3\Vert \psi' \Vert^2 n\epsilon'^2=n\epsilon^2.
\end{eqnarray*}
Thus $\D(\epsilon)$ is a finite subset of $\mathbb{R}^n$ associated with the gradient vector of $f+e$. The proof is completed by applying Theorem \ref{CD1}. \qed

\vskip.2truein

\noindent \textit{Proof of the lower bound.} Fix any $y \in [0,1]^n$ such that
$|h(y)|\leq (t-\delta_0)n$.
Let $Y=(Y_1,\dots,Y_n)$ be a random vector with independent components, where each $Y_i$ is a $\text{Bernoulli}(y_i)$ random variable. Let $Y^{(i)}$ be the random vector $(Y_1,\dots,Y_{i-1}, 0, Y_{i+1}, \dots, Y_n)$.\ Let
\begin{equation}
A_1=\{x\in \{0,1\}^n: |h(x)| \leq tn\},
\end{equation}
\begin{equation}
A_2=\{x\in \{0,1\}^n: |g(x,y)-I(y)|\leq \epsilon_0 n\},
\end{equation}
\begin{equation}
A_3=\{x\in \{0,1\}^n: |f(x)-f(y)|\leq \eta_0 n\}.
\end{equation}
Let $A=A_1\cap A_2\cap A_3$. Then
\begin{eqnarray}
\label{lower}
\sum_{x\in \{0,1\}^n: |h(x)|\leq tn}e^{f(x)}
&=&\sum_{x\in A_1}e^{f(x)-g(x,y)+g(x,y)}\\\notag
&\geq&\sum_{x\in A}e^{f(x)-g(x,y)+g(x,y)}\\\notag
&\geq&e^{f(y)-I(y)-(\epsilon_0+\eta_0) n}\PR(Y\in A).
\end{eqnarray}

We first consider $\PR(Y\in A_1)$. Let $U=h(Y)-h(y)$. For $t\in [0,1]$ and $x\in [0,1]^n$ define $u_i(t,x)=h_i(tx+(1-t)y)$. Note that
\begin{equation}
U=\int_0^1 \sum_{i=1}^n (Y_i-y_i)u_i(t,Y)dt,
\end{equation}
which implies
\begin{equation}
\ER(U^2)=\int_0^1 \sum_{i=1}^n \ER((Y_i-y_i)u_i(t,Y)U)dt.
\end{equation}
Let $U_i=h(Y^{(i)})-h(y)$ so that $Y^{(i)}$ and $U_i$ are functions of random variables $(Y_j)_{j\neq i}$ only. By the independence of $Y_i$ and $(Y^{(i)}, U_i)$, we have
\begin{equation}
\ER((Y_i-y_i)u_i(t,Y^{(i)})U_i)=0.
\end{equation}
Therefore
\begin{eqnarray}
|\ER((Y_i-y_i)u_i(t,Y)U)|&\leq&
\ER|((u_i(t,Y)-u_i(t,Y^{(i)}))U|+\ER|u_i(t,Y^{(i)})(U-U_i)|\\\notag
&\leq&\left\Vert\frac{\partial u_i}{\partial x_i}\right\Vert\Vert U \Vert+\Vert u_i \Vert \left\Vert U-U_i \right \Vert \\\notag
&\leq& 2\alpha t\gamma_{ii}+\beta_i^2.
\end{eqnarray}
This gives
\begin{equation}
\label{YA1}
\PR(Y \in A_1^c) \leq \PR(|U|\geq \delta_0 n)\leq \frac{\ER(U^2)}{\delta_0^2 n^2} \leq \frac{\sum_{i=1}^n (\alpha \gamma_{ii}+\beta_i^2)}{\delta_0^2 n^2}=\frac{1}{6}.
\end{equation}

Next we consider $\PR(Y\in A_2)$. Note that
\begin{equation}
\ER(g(Y,y))=I(y)
\end{equation}
and \begin{eqnarray}
\VR(g(Y,y))&=&\sum_{i=1}^n \VR(Y_i\log y_i+(1-Y_i)\log(1-y_i))\\\notag
&=&\sum_{i=1}^n y_i(1-y_i)\left(\log \frac{y_i}{1-y_i}\right)^2.
\end{eqnarray}
For $x\in [0,1]$, since $|\sqrt{x}\log x|\leq 1$, we have
\begin{equation}
x(1-x)\left(\log \frac{x}{1-x}\right)^2 \leq \left(|\sqrt{x}\log x|+|\sqrt{1-x}\log(1-x)|\right)^2\leq 4.
\end{equation}
Therefore
\begin{equation}
\label{YA2}
\PR(Y\in A_2^c)\leq \PR(|g(Y,y)-I(y)|\geq \epsilon_0 n)\leq \frac{\VR (g(Y,y))}{\epsilon_0^2 n^2}\leq \frac{4}{\epsilon_0^2 n}=\frac{1}{6}.
\end{equation}

Finally we consider $\PR(Y\in A_3)$. Let $V=f(Y)-f(y)$. For $t\in [0,1]$ and $x\in [0,1]^n$ define $v_i(t,x)=f_i(tx+(1-t)y)$. Note that
\begin{equation}
V=\int_0^1 \sum_{i=1}^n (Y_i-y_i)v_i(t,Y)dt,
\end{equation}
which implies
\begin{equation}
\ER(V^2)=\int_0^1 \sum_{i=1}^n \ER((Y_i-y_i)v_i(t,Y)V)dt.
\end{equation}
Let $V_i=f(Y^{(i)})-f(y)$ so that $Y^{(i)}$ and $V_i$ are functions of random variables $(Y_j)_{j\neq i}$ only. By the independence of $Y_i$ and $(Y^{(i)}, V_i)$, we have
\begin{equation}
\ER((Y_i-y_i)v_i(t,Y^{(i)})V_i)=0.
\end{equation}
Therefore
\begin{eqnarray}
|\ER((Y_i-y_i)v_i(t,Y)V)|&\leq&
\ER|((v_i(t,Y)-v_i(t,Y^{(i)}))V|+\ER|v_i(t,Y^{(i)})(V-V_i)|\\\notag
&\leq&\left\Vert\frac{\partial v_i}{\partial x_i}\right\Vert\Vert V \Vert+\Vert v_i \Vert \left\Vert V-V_i \right \Vert \\\notag
&\leq& 2atc_{ii}+b_i^2.
\end{eqnarray}
This gives
\begin{equation}
\label{YA3}
\PR(Y \in A_3^c)\leq\PR(|V|\geq \eta_0 n) \leq \frac{\ER(V^2)}{\eta_0^2 n^2} \leq \frac{\sum_{i=1}^n (ac_{ii}+b_i^2)}{\eta_0^2 n^2}=\frac{1}{6}.
\end{equation}

Combining (\ref{YA1}), (\ref{YA2}) and (\ref{YA3}), we have
\begin{equation}
\PR(Y\in A)\geq 1-\PR(Y\in A_1^c)-\PR(Y\in A_2^c)-\PR(Y\in A_3^c)\geq \frac{1}{2}.
\end{equation}
Plugging this into (\ref{lower}) and taking supremum over $y$ completes the proof. \qed

\section{Application to exponential random graphs}
\label{application}
As mentioned earlier, we would like to apply Theorem \ref{main1} to derive the exact asymptotics for the conditional normalization constant of constrained exponential random graphs. Recall the definition of an $s$-parameter family of conditional exponential random graphs introduced earlier, where we assume that the ``ideal'' edge density of the graph is $e$. Let
\begin{equation}
f(x)=\zeta_1T_1(x)+\cdots+\zeta_sT_s(x) \text{ and } h(x)=T_1(x)-N^2e,
\end{equation}
where $T_i(x)/N^2$ is the equivalent notion of homomorphism density as defined in (\ref{T}). Let $n=\binom N2$. We compare the conditional normalization constant $\psi^{e, \zeta}_{N,t}$ (\ref{cpsi1}) for constrained exponential random graphs with the generic conditional normalization constant $F^c$ (\ref{F}). Note that the constraint $|e(G_N)-e|\leq t$ may be translated into $|T_1(x)-N^2e|\leq N^2t$, and if we further redefine $t$ to be $(1-1/N)t'/2$ then we arrive at the generic constraint $|h(x)|\leq t'n$ as in (\ref{F}). Thus $\psi^{e, \zeta}_{N,t}=F^c/N^2$. In the following we give a concrete error bound for $\psi^{e, \zeta}_{N,t}$ using the estimates in Theorem \ref{main1}. Our proof is analogous to the proof of Theorem 1.6 in Chatterjee and Dembo \cite{CD1}, where they analyzed various error bounds for the generic normalization constant obtained in Theorem 1.5 (referenced as Theorem \ref{CD1} in this paper) and applied it in the exponential setting. Instead, we analyze the various error bounds for the generic conditional normalization constant obtained in Theorem \ref{main1} and apply it in the constrained exponential setting. The rationales behind the two arguments are essentially the same, except that the argument to be presented in the proof of Theorem \ref{general} is more involved due to the imposed constraint.

\begin{theorem}
\label{general}
Let $s$ be a positive integer and $H_1,\dots,H_s$ be fixed finite simple graphs. Let $N$ be another positive integer and let $n=\binom N2$. Define $T_1,\dots,T_s$ accordingly as in the paragraph before Theorem \ref{previous}. Let $\zeta_1,\dots,\zeta_s$ be $s$ real parameters and define $\psi^{e, \zeta}_{N,t}$ as in (\ref{cpsi1}). Let $f(x)=\zeta_1 T_1(x)+\cdots+\zeta_s T_s(x)$, $B=1+|\zeta_1|+\cdots+|\zeta_s|$, and $I$ be defined as in (\ref{I}). Take $\kappa>8$. Then
\begin{equation}
\sup_{x\in [0,1]^n: |h(x)|\leq (t'-cn^{-1/(2\kappa)})n}\frac{f(x)-I(x)}{N^2}-CBN^{-1/2} \leq \psi^{e, \zeta}_{N,t}
\end{equation}
\begin{equation*}
\leq \sup_{x\in [0,1]^n: |h(x)|\leq (t'+cn^{-1/(2\kappa)})n}\frac{f(x)-I(x)}{N^2}+CB^{8/5}N^{(8-\kappa)/(5\kappa)}(\log N)^{1/5}\left(1+\frac{\log B}{\log N}\right)
\end{equation*}
\begin{equation*}
+CB^2N^{(2-\kappa)/(2\kappa)},
\end{equation*}
where $t'=2Nt/(N-1)$ and $c$ and $C$ are constants that may depend only on $H_1,\dots,H_s$ and $e$.
\end{theorem}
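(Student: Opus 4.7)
The strategy is to apply Theorem \ref{main1} with the pair $(f,h)$ dictated by the exponential random graph setup, and then optimize the free parameters $\delta,\epsilon$ along the lines of the Chatterjee--Dembo proof of Theorem \ref{previous}. First I would translate the problem: with $f(x)=\sum_{k=1}^s\zeta_k T_k(x)$ and $h(x)=T_1(x)-N^2e$, the correspondence $x_{ij}=\mathbbm{1}_{\{(i,j)\in E(G_N)\}}$ rewrites $|e(G_N)-e|\leq t$ as $|h(x)|\leq t'n$ with $t'=2Nt/(N-1)$, so that $\psi^{e,\zeta}_{N,t}=F^c/N^2$ in the sense of (\ref{F}).

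Next I would collect the relevant norms from (\ref{T1})--(\ref{T3}). For $f$: $a\leq c_H BN^2$, $b_i\leq c_H B$, $c_{ii}=0$ (each $T_k$ is multilinear in $x_{ij}$), $c_{ij}\leq c_HB/N$ or $c_HB/N^2$ depending on the overlap of the two indexing edges, and $\log|\D_f(\epsilon)|\leq CB^4N\epsilon^{-4}\log(CB\epsilon^{-1})$ (built by summing the $\D_{T_k}$ rescaled by $\zeta_k$). For $h$: since $H_1$ is a single edge, $T_1$ is linear, hence $\alpha\leq N^2$, $\beta_i=2$, $\gamma_{ij}=0$, and $|\D_h(\epsilon)|=1$ for every $\epsilon>0$. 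Feeding these into Theorem \ref{main1} gives $K\leq cB$, $l\leq cBN^2$, $m_i\leq cB(1+\delta^{-1})$, and $n_{ij}\leq c_{ij}+cBn^{-1}\delta^{-2}$.

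I would then optimize. Setting $\delta=cn^{-1/(2\kappa)}$ matches the constraint shift in the conclusion. In the complexity term the dominant contributions (after division by $N^2$) are $CB\epsilon/\delta$ (from $(n\sum m_i^2)^{1/2}\epsilon$) and $CB^4N^{-1}\epsilon^{-4}\log(CB/\epsilon)$ (from $\log|\D_f|$), while $\log|\D_h|$ contributes nothing. Balancing in $\epsilon$ yields $\epsilon\asymp(B^3\delta\log N/N)^{1/5}$, which on substitution produces the $CB^{8/5}N^{(8-\kappa)/(5\kappa)}(\log N)^{1/5}(1+\log B/\log N)$ piece. The smoothness term, with its new $cBn^{-1}\delta^{-2}$ contributions to each $n_{ij}$ propagating through $\sqrt{\sum(ln_{ii}+m_i^2)}$ and the cross-index sums, furnishes the residual $CB^2N^{(2-\kappa)/(2\kappa)}$. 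For the lower bound I would apply the second half of Theorem \ref{main1} directly: $\gamma_{ij}=0$ and $c_{ii}=0$ give $\delta_0=2\sqrt{6/n}$, $\epsilon_0=2\sqrt{6/n}$, and $\eta_0\leq cB/\sqrt{n}$; the first is dominated by $cn^{-1/(2\kappa)}$ for any $\kappa>1$, and $(\epsilon_0+\eta_0)n/N^2\leq CBN^{-1/2}$ supplies the stated lower bound error.

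The main obstacle is the careful bookkeeping of the polynomial factors in $B,N,\delta,\epsilon$ through every term of the complexity and smoothness expressions and verifying that the optimal balance yields precisely the $(8-\kappa)/(5\kappa)$ and $(2-\kappa)/(2\kappa)$ exponents. The imposed constraint forces a new $cBn^{-1}\delta^{-2}$ term into each $n_{ij}$; this is the sole source of $\kappa$-dependence in the final error and degenerates to the Chatterjee--Dembo estimate of Theorem \ref{previous} in the formal limit $\kappa\to\infty$ (i.e., $\delta$ of order $1$).
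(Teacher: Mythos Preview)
Your overall plan---apply Theorem \ref{main1} to the pair $(f,h)$ and then optimize in $\epsilon$ and $\delta$---is exactly the paper's route. You add one genuine simplification the paper does not use: since $H_1$ is a single edge, $T_1$ is linear, so indeed $\gamma_{(ij)(i'j')}=0$, $\beta_{(ij)}=2$, and $\D_h(\epsilon)$ may be taken to be a singleton. The paper instead plugs in the generic Chatterjee--Dembo bounds $\gamma_{(ij)(i'j')}\leq CN^{-1}$ and $\log|\D_h(\epsilon)|\leq CN\epsilon^{-4}\log(C/\epsilon)$, which is what produces the extra $\delta^{-4}$ in the $\D_h$ term and forces $\epsilon\asymp (B^3\log N/(\delta^3 N))^{1/5}$ rather than your $(B^3\delta\log N/N)^{1/5}$.

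Two slips, however. First, the claim $c_{(ij)(ij)}=0$ because ``each $T_k$ is multilinear'' is false for general $H_k$: already for the two-edge path, the map $q_1=q_3=i$, $q_2=j$ contributes $x_{ij}^2$, so $\partial^2 T/\partial x_{ij}^2\neq 0$. One must use (\ref{T2}), i.e.\ $c_{(ij)(ij)}\leq CBN^{-1}$; with this correction $\eta_0 n/N^2\leq CBN^{-1/2}$ still holds, so your lower-bound conclusion survives, but the justification needs repair. Second, your own balancing does \emph{not} reproduce the stated exponent: with $|\D_h|=1$ the optimized complexity term is $CB^{8/5}(\log N/N)^{1/5}\delta^{-4/5}$, which for $\delta=cn^{-1/(2\kappa)}$ gives $N^{(4-\kappa)/(5\kappa)}$, strictly smaller than the $N^{(8-\kappa)/(5\kappa)}$ in the statement. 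So your approach actually yields a sharper upper bound (hence still proves the theorem), but the arithmetic you wrote is inconsistent with the simplification you introduced; the paper's exponent $(8-\kappa)/(5\kappa)$ comes precisely from \emph{not} exploiting $|\D_h|=1$.
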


\begin{proof}
Chatterjee and Dembo \cite{CD1} checked that $T_i(x)$ satisfies both the smoothness condition and the low complexity gradient condition stated at the beginning of this paper, which readily implies that $f$ and $h$ satisfy the assumptions of Theorem \ref{main1}. Recall that the indexing set for quantities like $b_i$ and $\gamma_{ij}$, instead of being $\{1,\dots,n\}$, is now $\{(i,j): 1\leq i<j\leq N\}$, and for simplicity we write $(ij)$ instead of $(i, j)$. Let $a$, $b_{(ij)}$, $c_{(ij)(i'j')}$ be the supremum norms of $f$ and let $\alpha$, $\beta_{(ij)}$, $\gamma_{(ij)(i'j')}$ be the corresponding supremum norms of $h$. For any $\epsilon>0$, let $\D_f(\epsilon)$ and $\D_h(\epsilon)$ be finite subsets of $\mathbb{R}^n$ associated with the gradient vectors of $f$ and $h$ respectively.

Based on the bounds for $T_i$ (\ref{T1}) (\ref{T2}) (\ref{T3}), we derive the bounds for $f$ and $h$.
\begin{equation}
a\leq CBN^2, \hspace{0.5cm} b_{(ij)}\leq CB,
\end{equation}
\begin{equation}
c_{(ij)(i'j')}\leq \left\{%
\begin{array}{ll}
    CBN^{-1}, & \hbox{if $|\{i,j,i',j'\}|=2$ or $3$;} \\
    CBN^{-2}, & \hbox{if $|\{i,j,i',j'\}|=4$,} \\
\end{array}%
\right.
\end{equation}
\begin{equation}
|\D_f(\epsilon)|\leq \prod_{i=1}^s |\D_i(\epsilon/(\zeta_i s))|\leq \exp\left(\frac{CB^4N}{\epsilon^4}\log\frac{CB}{\epsilon}\right).
\end{equation}
\begin{equation}
\alpha\leq CN^2, \hspace{0.5cm} \beta_{(ij)}\leq C,
\end{equation}
\begin{equation}
\gamma_{(ij)(i'j')}\leq \left\{%
\begin{array}{ll}
    CN^{-1}, & \hbox{if $|\{i,j,i',j'\}|=2$ or $3$;} \\
    CN^{-2}, & \hbox{if $|\{i,j,i',j'\}|=4$,} \\
\end{array}%
\right.
\end{equation}
\begin{equation}
|\D_h(\epsilon)|=|\D_1(\epsilon)|\leq \exp\left(\frac{CN}{\epsilon^4}\log\frac{C}{\epsilon}\right).
\end{equation}

We then estimate the lower and upper error bounds for $\psi^{e, \zeta}_{N,t}$ using the bounds on $f$ and $h$ obtained above. First the lower bound:
\begin{equation}
\sum_{(ij)}ac_{(ij)(ij)}\leq CB^2N^3, \hspace{0.5cm} \sum_{(ij)}b_{(ij)}^2 \leq CB^2N^2.
\end{equation}
\begin{equation}
\sum_{(ij)}\alpha \gamma_{(ij)(ij)}\leq CN^3, \hspace{0.5cm} \sum_{(ij)}\beta_{(ij)}^2 \leq CN^2.
\end{equation}
Therefore
\begin{equation}
\label{delta0}
\delta_0 \leq cn^{-1/4}\leq cn^{-1/(2\kappa)},
\end{equation}
\begin{equation}
\frac{\epsilon_0 n+\eta_0n+\log 2}{N^2}\leq CN^{-1}+CBN^{-1/2}+CN^{-2}\leq CBN^{-1/2}.
\end{equation}
This gives
\begin{equation}
\psi^{e, \zeta}_{N,t} \geq \sup_{x\in [0,1]^n: |h(x)|\leq (t'-cn^{-1/(2\kappa)})n}\frac{f(x)-I(x)}{N^2}-CBN^{-1/2},
\end{equation}

Next the more involved upper bound: Assume that $n^{-1/4}\leq \delta\leq 1$ and $0<\epsilon\leq 1$. Since $K\leq CB$, this implies that
\begin{equation}
l\leq CBN^2, \hspace{0.5cm} m_{(ij)}\leq CB\delta^{-1},
\end{equation}
\begin{equation}
n_{(ij)(i'j')}\leq \left\{%
\begin{array}{ll}
    CBN^{-1}\delta^{-1}, & \hbox{if $|\{i,j,i',j'\}|=2$ or $3$;} \\
    CBN^{-2}\delta^{-2}, & \hbox{if $|\{i,j,i',j'\}|=4$.} \\
\end{array}%
\right.
\end{equation}
The following estimates are direct consequences of the bounds on $l$, $m_{(ij)}$, and $n_{(ij)(i'j')}$.
\begin{equation}
\sum_{(ij)}ln_{(ij)(ij)}\leq CB^2N^3\delta^{-1}, \hspace{0.5cm} \sum_{(ij)}m_{(ij)}^2\leq CB^2N^2\delta^{-2},
\end{equation}
\begin{equation}
\sum_{(ij)(i'j')}ln_{(ij)(i'j')}^2 \leq CB^3N^3\delta^{-2},
\end{equation}
\begin{equation}
\sum_{(ij)(i'j')}m_{(ij)}(m_{i'j'}+4)n_{(ij)(i'j')}\leq CB^3N^2\delta^{-4},
\end{equation}
\begin{equation}
\sum_{(ij)}n_{(ij)(ij)}^2\leq CB^2\delta^{-2}, \hspace{0.5cm} \sum_{(ij)}n_{(ij)(ij)}\leq CBN\delta^{-1}.
\end{equation}
Therefore
\begin{align}
\text{complexity term }&\leq CBN^2\delta^{-1}\epsilon+CN^2\epsilon+\log\frac{CB}{\delta\epsilon}+\frac{CB^4N}{\epsilon^4}\log\frac{CB}{\epsilon}+\frac{CB^4N}{\delta^4\epsilon^4}\log\frac{CB}{\delta\epsilon}
\\
&\leq CBN^2\delta^{-1}\epsilon+\frac{CB^4N}{\delta^4\epsilon^4}\log\frac{CB}{\delta\epsilon}.
\nonumber
\end{align}
\begin{equation}
\text{smoothness term }\leq CB^{3/2}N^{3/2}\delta^{-1}+CB^2N\delta^{-2}+CBN\delta^{-1}+C\leq CB^{2}N^{3/2}\delta^{-1}.
\end{equation}
Taking $\epsilon=(B^3\log N)/(\delta^3 N)^{1/5}$, this gives
\begin{equation}
\psi^{e, \zeta}_{N,t} \leq \sup_{x\in [0,1]^n: |h(x)|\leq (t'+\delta)n}\frac{f(x)-I(x)}{N^2}+CB^{8/5}N^{-1/5}(\log N)^{1/5}\delta^{-8/5}\left(1+\frac{\log B}{\log N}\right)
\end{equation}
\begin{equation*}
+CB^2N^{-1/2}\delta^{-1}.
\end{equation*}
For $n$ large enough, we may choose $\delta=cn^{-1/(2\kappa)}$ as in (\ref{delta0}), which yields a further simplification
\begin{equation}
\psi^{e, \zeta}_{N,t} \leq \sup_{x\in [0,1]^n: |h(x)|\leq (t'+cn^{-1/(2\kappa)})n}\frac{f(x)-I(x)}{N^2}+CB^{8/5}N^{(8-\kappa)/(5\kappa)}(\log N)^{1/5}\left(1+\frac{\log B}{\log N}\right)
\end{equation}
\begin{equation*}
+CB^2N^{(2-\kappa)/(2\kappa)}.
\end{equation*}
\end{proof}

We can do a more refined analysis of Theorem \ref{general} when $\zeta_i$'s are non-negative for $i\geq 2$.

\begin{theorem}
\label{special}
Let $s$ be a positive integer and $H_1,\dots,H_s$ be fixed finite simple graphs. Let $N$ be another positive integer and let $n=\binom N2$. Let $\zeta_1,\dots,\zeta_s$ be $s$ real parameters and suppose $\zeta_i\geq 0$ for $i\geq 2$. Define $\psi^{e, \zeta}_{N,t}$ as in (\ref{cpsi1}). Let $B=1+|\zeta_1|+\cdots+|\zeta_s|$ and $I$ be defined as in (\ref{I}). Take $\kappa>8$. Then
\begin{equation}
\label{last}
-cBN^{-1/\kappa} \leq \psi^{e, \zeta}_{N,t}-\sup_{|x-e|\leq t}\left\{\zeta_1 x+\cdots+\zeta_k x^{e(H_k)}-\frac{1}{2}I(x)\right\}
\end{equation}
\begin{equation*}
\leq CB^{8/5}N^{(8-\kappa)/(5\kappa)}(\log N)^{1/5}\left(1+\frac{\log B}{\log N}\right)
\end{equation*}
\begin{equation*}
+CB^2N^{-1/\kappa},
\end{equation*}
where $e(H_i)$ denotes the number of edges in $H_i$ and $c$ and $C$ are constants that may depend only on $H_1,\dots,H_s$, $e$, and $t$.
\end{theorem}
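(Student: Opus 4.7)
The plan is to derive Theorem \ref{special} from Theorem \ref{general} by reducing the graphon-level variational problems on both sides of that sandwich estimate to a scalar optimization over $p \in [e-t, e+t]$. The sign condition $\zeta_i \geq 0$ for $i \geq 2$ is exactly what allows the reduction on the upper-bound side; the argument parallels the passage from the graphon variational formula to the Erd\H{o}s-R\'{e}nyi scalar variational formula in the unconstrained Chatterjee-Diaconis setting, with the additional bookkeeping of the relaxed constraint $|h(x)| \leq (t' \pm c n^{-1/(2\kappa)}) n$ inherited from Theorem \ref{general}.

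For the lower bound on $\psi^{e,\zeta}_{N,t}$, I would restrict the supremum in Theorem \ref{general}'s lower bound to the constant configurations $x_{ij} \equiv p$. At such an $x$ a direct computation yields $T_i(x) = N^2 p^{e(H_i)}(1 + O(N^{-1}))$, $h(x) = N^2(p-e)(1 + O(N^{-1}))$, and $I(x) = \binom{N}{2} I(p)$, so that
\[
\frac{f(x) - I(x)}{N^2} = \zeta_1 p + \sum_{i \geq 2} \zeta_i p^{e(H_i)} - \tfrac{1}{2} I(p) + O(B N^{-1}).
\]
Translating the constraint $|h(x)| \leq (t' - cn^{-1/(2\kappa)}) n$ into the scalar constraint $|p - e| \leq t - O(n^{-1/(2\kappa)})$, optimizing over $p$, and using Lipschitz continuity of the scalar objective to absorb the width contraction into the error delivers the claimed lower bound $-cBN^{-1/\kappa}$.

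For the upper bound, the starting point is Theorem \ref{general}'s upper estimate. The task is to replace $\sup_{x \in [0,1]^n,\,|h(x)| \leq (t'+cn^{-1/(2\kappa)})n} (f(x)-I(x))/N^2$ by the scalar supremum. For any admissible $x$, write $p := t(H_1, x)$, so that $|p - e| \leq t + O(n^{-1/(2\kappa)})$. Convexity of $u \mapsto u \log u + (1-u) \log(1-u)$ combined with Jensen yields $\sum_{(ij)} I(x_{ij}) \geq n I(p)$, so $-I(x)/N^2 \leq -\tfrac{1}{2} I(p) + O(N^{-1})$. For the homomorphism-density terms, under $\zeta_i \geq 0$ for $i \geq 2$, a H\"{o}lder / Kruskal-Katona-type one-sided inequality relating $t(H_i, x)$ to $p^{e(H_i)}$ upgrades $\sum_{i \geq 2} \zeta_i t(H_i, x)$ to $\sum_{i \geq 2} \zeta_i p^{e(H_i)}$ modulo lower-order corrections. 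Supping over the admissible range of $p$ and using continuity of the scalar objective in the constraint radius to absorb the $O(n^{-1/(2\kappa)}) = O(N^{-1/\kappa})$ slippage into the $CB^2 N^{-1/\kappa}$ error, while inheriting the $CB^{8/5} N^{(8-\kappa)/(5\kappa)} (\log N)^{1/5}(1 + \log B / \log N)$ term directly from Theorem \ref{general}, produces the stated upper bound.

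The main obstacle I anticipate is the homomorphism-density step in the upper-bound reduction: the required inequality runs in the favorable direction only thanks to $\zeta_i \geq 0$, and establishing a uniform-in-$x$ version that feeds cleanly into the scalar sup --- without losing more than the advertised $N^{-1/\kappa}$ of slack --- is the most delicate point, since one must control all $s$ subgraph terms simultaneously. The constraint-width bookkeeping, translating the $O(n^{-1/(2\kappa)})$ relaxation in $|h(x)|$ into $O(N^{-1/\kappa})$ in $|p - e|$ via Lipschitz continuity of $p \mapsto \zeta_1 p + \sum_{i \geq 2} \zeta_i p^{e(H_i)} - \tfrac{1}{2} I(p)$ on an open neighborhood of $[e-t, e+t]$, is by contrast routine.
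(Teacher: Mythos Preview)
Your lower-bound argument (restricting to constant configurations $x_{ij}\equiv p$ and tracking the $O(N^{-1})$ diagonal corrections) is correct and is exactly what the paper does. The bookkeeping translating the $n^{-1/(2\kappa)}$ slack in the constraint into the $N^{-1/\kappa}$ error term is also fine.

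The upper-bound argument, however, has a genuine gap. You split the objective into an entropy part (handled by Jensen, which is correct) and a homomorphism-density part, for which you invoke ``a H\"older / Kruskal--Katona-type one-sided inequality relating $t(H_i,x)$ to $p^{e(H_i)}$''. No such pointwise inequality holds: for $x$ supported on a clique over half the vertices, the edge density is $p\approx 1/4$ while $t(K_3,x)\approx 1/8 \gg p^{3}=1/64$, so $t(H_i,x)\le p^{e(H_i)}$ is simply false. Kruskal--Katona gives bounds of the form $t(K_r,x)\lesssim p^{r/2}$, not $p^{\binom{r}{2}}$, and no H\"older argument can repair this. The reason the full objective is still maximized at constants is that such clique-like configurations carry zero entropy ($I(x)=0$), so the gain in density is more than paid for by the loss in $-I(x)$; but once you have already spent Jensen on the entropy term you have thrown away exactly the quantity needed to compensate, and the separated density bound is unrecoverable.

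The paper avoids this by \emph{not} separating entropy and density. It embeds each $x\in[0,1]^n$ as a step-function graphon $g$, so that the discrete supremum is bounded above by the corresponding constrained supremum over symmetric measurable $g:[0,1]^2\to[0,1]$, and then invokes the Chatterjee--Diaconis result that, when $\zeta_i\ge 0$ for $i\ge 2$, this graphon variational problem is maximized at constant $g\equiv u$. That result is a statement about the combined functional $\sum_i\zeta_i t(H_i,g)-\tfrac12\iint I(g)$, and its proof does not proceed via a pointwise density comparison at fixed edge density. Replacing your density step by this graphon embedding plus the Chatterjee--Diaconis citation fixes the argument and brings it in line with the paper's proof.
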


\begin{remark}
If $H_i$, $i\geq 2$ are all stars, then the conclusions of Theorem \ref{special} hold for any $\zeta_1,\dots,\zeta_s$.
\end{remark}

\begin{remark}
As an example, consider the case where $s=2$, $H_1$ is a single edge and $H_2$ is a triangle. Theorem \ref{special} shows that the difference between $\psi^{e, \zeta}_{N,t}$ and $\sup_{|x-e|\leq t}\left\{\zeta_1 x+\zeta_2 x^{3}-\frac{1}{2}I(x)\right\}$ tends to zero as long
as $|\zeta_1|+|\zeta_2|$ grows slower than $N^{(\kappa-8)/(8\kappa)}(\log N)^{-1/8}$, thereby allowing a small degree of sparsity for $\zeta_i$. When $\zeta_i$'s are fixed, it provides an approximation error bound of order $N^{(8-\kappa)/(5\kappa)}(\log N)^{1/5}$, substantially
better than the negative power of $\log^* N$ given by Szemer\'{e}di's lemma.
\end{remark}

\begin{proof}
Fix $t>0$. We find upper and lower bounds for
\begin{equation}
L_N=\sup_{x\in [0,1]^n: |h(x)|\leq (t'+cn^{-1/(2\kappa)})n}\frac{f(x)-I(x)}{N^2}
\end{equation}
and
\begin{equation}
M_N=\sup_{x\in [0,1]^n: |h(x)|\leq (t'-cn^{-1/(2\kappa)})n}\frac{f(x)-I(x)}{N^2}
\end{equation}
in Theorem \ref{general} when $N$ is large.

On one hand, by considering $g(x,y)=x_{ij}$ for any $(\frac{i-1}{N}, \frac{i}{N}] \times (\frac{j-1}{N}, \frac{j}{N}]$ and $i\neq j$, we have
\begin{equation}
L_N \leq \sup_{\substack{g: [0,1]^2 \rightarrow [0,1], g(x,y)=g(y,x)\\
|e(g)-e| \leq t+\frac{c}{2}n^{-1/(2\kappa)}}} \left\{\zeta_1t(H_1, g)+\cdots+\zeta_kt(H_k, g)-\frac{1}{2}\iint_{[0,1]^2}I(g(x,y))dxdy\right\},
\end{equation}
\begin{equation}
M_N \leq \sup_{\substack{g: [0,1]^2 \rightarrow [0,1], g(x,y)=g(y,x)\\
|e(g)-e| \leq t}} \left\{\zeta_1t(H_1, g)+\cdots+\zeta_kt(H_k, g)-\frac{1}{2}\iint_{[0,1]^2}I(g(x,y))dxdy\right\}.
\end{equation}
It was proved in Chatterjee and Diaconis \cite{CD} that when $\zeta_i$'s are non-negative for $i\geq 2$, the above supremum may only be attained at constant functions on $[0,1]$. Therefore
\begin{equation}
L_N \leq \sup_{
|x-e|\leq t+\frac{c}{2}n^{-1/(2\kappa)}} \left\{\zeta_1 x+\cdots+\zeta_k x^{e(H_k)}-\frac{1}{2}I(x)\right\},
\end{equation}
\begin{equation}
M_N \leq \sup_{
|x-e|\leq t} \left\{\zeta_1 x+\cdots+\zeta_k x^{e(H_k)}-\frac{1}{2}I(x)\right\}.
\end{equation}

On the other hand, by considering $g'(x,y)=x_{ij}\equiv x$ for any $i\neq j$, we have
\begin{equation}
L_N \geq \sup_{|\frac{N-1}{N}x-e|\leq t}\left\{\zeta_1 x+\cdots+\zeta_k x^{e(H_k)}-\frac{1}{2}I(x)\right\}+O(\frac{1}{N}),
\end{equation}
\begin{equation}
M_N \geq \sup_{|\frac{N-1}{N}x-e|\leq t-\frac{c}{2}n^{-1/(2\kappa)}}\left\{\zeta_1 x+\cdots+\zeta_k x^{e(H_k)}-\frac{1}{2}I(x)\right\}+O(\frac{1}{N}).
\end{equation}
The $O(1/N)$ factor comes from the following consideration. The difference between $I(g')$ and $I(x)$ is easy to estimate, while the difference between $t(H_i, g')$ and $t(H_i, x)=x^{e(H_i)}$ is caused by the zero diagonal terms $x_{ii}$. We do a broad estimate of (\ref{T}) and find that it is bounded by $c_i/N$, where $c_i$ is a constant that only depends on $H_i$. Putting everything together,
\begin{equation}
L_N=M_N=\sup_{
|x-e|\leq t} \left\{\zeta_1 x+\cdots+\zeta_k x^{e(H_k)}-\frac{1}{2}I(x)\right\}+O(\frac{1}{N^{1/\kappa}}).
\end{equation}
The rest of the proof follows.
\end{proof}

\section*{Acknowledgements}
The author thanks the anonymous referees for their helpful
comments and suggestions.


\begin{thebibliography}{11}
\bibitem{AR} Aristoff, D, Radin, C.: Emergent structures in large
networks. J. Appl. Prob. 50, 883-888 (2013)

\bibitem{BCCZ1} Borgs, C., Chayes, J., Cohn, H., Zhao, Y.:
An $L^p$ theory of sparse graph convergence I. Limits, sparse random graph models, and power law distributions.
arXiv: 1401.2906 (2014)

\bibitem{BCCZ2} Borgs, C., Chayes, J., Cohn, H., Zhao, Y.:
An $L^p$ theory of sparse graph convergence II. LD convergence, quotients, and right convergence.
arXiv: 1408.0744 (2014)

\bibitem{CD1} Chatterjee, S., Dembo, A.: Nonlinear large deviations. arXiv: 1401.3495 (2014)

\bibitem{CD} Chatterjee, S., Diaconis, P.: Estimating and
understanding exponential random graph models. Ann. Statist. 41,
2428-2461 (2013)

\bibitem{CV} Chatterjee, S., Varadhan, S.R.S.: The large deviation principle for
the Erd\H{o}s-R\'{e}nyi random graph. European J. Combin. 32,
1000-1017 (2011)

\bibitem{KY} Kenyon, R., Yin, M.: On the asymptotics of constrained exponential random graphs. arXiv: 1406.3662 (2014)

\bibitem{Lov} Lov\'{a}sz, L.: Large Networks and Graph Limits. American Mathematical Society, Providence (2012)

\bibitem{RRS}
Radin, C., Ren, K., Sadun, L.: The asymptotics of large
constrained graphs. J. Phys. A 47, 175001 (2014)

\bibitem{RS1} Radin, C., Sadun, L.: Phase transitions in a complex
network. J. Phys. A 46, 305002 (2013)

\bibitem{RS} Radin, C., Sadun, L.: Singularities in the entropy of asymptotically large simple graphs. arXiv: 1302.3531 (2013)

\bibitem{RY} Radin, C., Yin, M.: Phase transitions in
exponential random graphs. Ann. Appl. Probab. 23, 2458-2471 (2013)

\bibitem{TET} Touchette, H., Ellis, R.S., Turkington, B.: An introduction to the thermodynamic and
macrostate levels of nonequivalent ensembles. Physica A 340, 138-146 (2004)

\bibitem{YRF} Yin, M., Rinaldo, A., Fadnavis, S.: Asymptotic quantization of exponential random graphs. arXiv: 1311.1738 (2013)

\bibitem{Z} Zhu, L.: Asymptotic structure of constrained exponential random graph models. arXiv: 1408.1536 (2014)
\end{thebibliography}
\end{document}